\begin{document}

\newcommand{\bbR}{{\mathbb R}}
\newcommand{\bbN}{{\mathbb N}}
\newcommand{\bbI}{{\mathbb I}}
\newcommand{\bbQ}{{\mathbb Q}}
\newcommand{\bbP}{{\mathbb P}}
\def\A{\mathscr{A}}
\def\B{\mathscr{B}}
\def\C{\mathscr{C}}
\def\D{\mathscr{D}}
\def\E{\mathscr{E}}
\def\F{\mathscr{F}}
\def\H{\mathscr{H}}
\def\K{\mathscr{K}}
\def\M{\mathscr{M}}
\def\N{\mathscr{N}}
\def\G{\mathscr{G}}
\def\P{\mathscr{P}}
\def\U{\mathscr{U}}
\def\V{\mathscr{V}}
\def\W{\mathscr{W}}
\renewcommand{\ss}{\subseteq}
\newcommand{\dsize}{\displaystyle}
\def\dom{\mathop{\mathrm{dom}}}
\newtheorem{thm}{Theorem}[section]
\newtheorem{cor}[thm]{Corollary}
\newtheorem{lem}[thm]{Lemma}
\newtheorem{prop}[thm]{Proposition}
\newtheorem{remark}[thm]{Remark}
\newtheorem{ex}[thm]{Example}
\font\sans=cmss10
\def\axiom#1{{\sans (#1)}}
\def\wk{\text{weak}}
\def\domc#1{{\dom c_{#1}}}
\def\sm{\smallsetminus}

\title{On isometric embeddings and continuous maps onto the irrationals}
\author{El\.{z}bieta Pol and Roman Pol}
\address{University of Warsaw \and  University of Warsaw}
\email{E.Pol@mimuw.edu.pl \and  R.Pol@mimuw.edu.pl}
\keywords{Isometric embeddings, Effros Borel spaces, zero-dimensional sets}
\subjclass[2000]{Primary: 54E40,  54F45, 54H05}
\date{\today}

\begin{abstract}
Let $f : E \to F$ be a continuous map of a complete separable metric space $E$ onto the irrationals. We shall show that if a complete separable metric space $M$ contains isometric copies of every closed relatively discrete set in $E$, then $M$ contains also an isometric copy of some fiber $f^{-1}(y)$. We shall show also that if all fibers of $f$ have positive dimension, then the collection of closed zero-dimensional sets in $E$ is non-analytic in the Wijsman hyperspace of $E$. These results, based on a classical Hurewicz's theorem, refine some results from \cite{PP} and answer a question in \cite{BCZ}.
\end{abstract}

\maketitle

\section{Introduction}

In  \cite{PP} we proved that each complete separable metric space containing isometric copies of every countable complete metric space contains isometric copies of every separable metric space.

 We shall refine this result to the following effect.


\begin{thm} Let $f : E \to F$ be a continuous map of a complete separable metric space onto a non-$\sigma$-compact metric space. Then there exists a relatively discrete set $S$ in $E$ such that, for any complete separable metric space $M$ containing  isometric copies of every subset of $S$ closed  in $E$, some fiber $f^{-1} (y)$ embeds isometrically in $M$.
\end{thm}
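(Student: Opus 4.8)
The plan is: first reduce to a map onto the irrationals, then manufacture $S$ by a Cantor--Hurewicz scheme adapted to $f$, and finally, given $M$, assemble an isometric copy of one fiber from the isometric copies of the closed subsets of $S$. For the reduction, note that since $E$ is a complete separable metric space and $f$ is continuous, $F=f(E)$ is analytic (in, say, a metric compactification), and, being non-$\sigma$-compact, the classical theorem of Hurewicz furnishes $F_0\ss F$ homeomorphic to the irrationals $\bbN^\bbN$ and closed in $F$. Then $E_0=f^{-1}(F_0)$ is closed in $E$, hence again a complete separable metric space; $f|E_0\colon E_0\to F_0$ is continuous onto; and its fibers are $f^{-1}(y)$, $y\in F_0$. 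Since a set relatively discrete in $E_0$ is relatively discrete in $E$ and --- $E_0$ being closed in $E$ --- a subset of it closed in $E_0$ is closed in $E$, a set $S$ that works for $f|E_0$ works for $f$. So we may assume $F=\bbN^\bbN$. Fix a complete metric $\rho\le 1$ on $E$ and, for $\sigma\in\bbN^{<\bbN}$, let $[\sigma]\ss\bbN^\bbN$ be the basic clopen set, so that $f^{-1}([\sigma])$ is clopen in $E$ and $f^{-1}(y)=\bigcap_n f^{-1}([y|n])$.

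\textbf{The set $S$.} Recursively on $|\sigma|$, I would build closed non-$\sigma$-compact sets $E_\sigma\ss f^{-1}([\sigma])$ (with $E_\emptyset=E$, the $E_{\sigma\frown j}$ pairwise disjoint closed non-$\sigma$-compact subsets of $E_\sigma$, well separated from one another), finite sets $D_\sigma\ss f^{-1}([\sigma])$, and finite sets $T_\sigma\ss E$, so that: the $T_\sigma$ are pairwise disjoint, each $T_\sigma$ is $2^{-|\sigma|}$-isometric to $D_\sigma$ (its metric within $2^{-|\sigma|}$ of that of $D_\sigma$); along every branch $y\in\bbN^\bbN$ the sets $D_{y|n}$ increase to a countable dense subset of $f^{-1}(y)$ and the near-isometries $T_{y|n}\to D_{y|n}$ are compatible; and the points of $T_\sigma$ are placed --- using the non-$\sigma$-compactness of the $E_\sigma$, which both keeps the recursion alive and creates room to spread out new points --- so far from one another and from all previously chosen points that $S:=\bigcup_\sigma T_\sigma$ is relatively discrete. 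Since each $T_\sigma$ is finite, every finite union $\bigcup_{\sigma\in R}T_\sigma$ is finite, hence closed in $E$, and therefore carries an isometric copy inside any $M$ as in the statement.

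\textbf{Recovering a fiber.} Given a complete separable metric space $M$ containing an isometric copy of every subset of $S$ closed in $E$, I would proceed along the tree, selecting a branch $y$ one coordinate at a time while building a coherent increasing chain of finite isometric maps $\psi_N$ of $\bigcup_{n\le N}T_{y|n}$ into $M$; at the step from $N$ to $N+1$ one uses that $M$ contains an isometric copy of the finite set $\bigcup_{n\le N+1}T_{y|n}$ and that the scheme has been arranged so that such a copy determines the placement of $\bigcup_{n\le N}T_{y|n}$, while the tree structure of $\bbN^\bbN$ lets the coordinates $y(N)$ be chosen consistently. Completeness of $M$ converts $(\psi_N)$ into an isometric embedding into $M$ of a countable set which, by the construction (the near-isometries $T_{y|n}\to D_{y|n}$ forcing the errors to vanish in the limit), carries the metric of a dense subset of $f^{-1}(y)$; taking the closure in $M$ yields an isometric copy of $f^{-1}(y)$.

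\textbf{The main difficulty.} The crux is to make the construction of $S$ and the fusion in $M$ fit together: the isometric copies of the \emph{discrete} closed subsets of $S$ must be forced to assemble into an isometric copy of a single, possibly \emph{non-discrete}, fiber. This is exactly where one must defeat the general phenomenon that a complete separable metric space can contain isometric copies of all closed discrete (indeed all finite) metric spaces yet fail to be isometrically universal, and it is here that Hurewicz's theorem enters essentially: the non-$\sigma$-compactness at every node of the scheme is what simultaneously supplies whole fibers as limits, leaves the room needed to keep $S$ relatively discrete, and provides the rigidity that makes the step-by-step fusion in $M$ go through --- completeness of $M$ being invoked only at the concluding limit. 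A secondary technical point is reconciling relative discreteness of $S$ with richness of its closed subsets, which the "spreading'' afforded by non-$\sigma$-compactness is meant to handle.
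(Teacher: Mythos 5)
Your reduction to a map onto a closed copy of the irrationals is fine, but the heart of the proposal --- recovering a fiber by a step-by-step fusion of isometric copies of the finite sets $\bigcup_{n\le N}T_{y|n}$ --- has a genuine gap, and in fact no argument of this shape can succeed. First, the coherence step has no mechanism behind it: the hypothesis only gives, for each finite $X\subset S$, the \emph{existence} of some isometric copy of $X$ in $M$; an isometric copy of $\bigcup_{n\le N+1}T_{y|n}$ does contain an isometric copy of $\bigcup_{n\le N}T_{y|n}$, but nothing forces that copy to coincide with, or even lie near, the copy $\psi_N$ fixed at the previous stage, and the set of placements of a finite metric space in $M$ is a non-compact subset of a power of $M$, so no K\"onig-type argument rescues the chain; the freedom in choosing $y(N)$ does not help either. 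Second, and decisively, your scheme invokes the hypothesis only for \emph{finite} subsets of $S$, and that information is provably insufficient: since $S$ is countable, one can amalgamate isometric copies of all its finite subsets into a single countable, closed, discrete, complete separable metric space $M$ (for instance as a closed discrete union inside the Urysohn space, or by a direct star-type amalgam pushing the pieces off to infinity). This $M$ contains an isometric copy of every finite subset of $S$ but, being countable, contains no fiber of, say, the projection $\bbN^{\bbN}\times[0,1]\to\bbN^{\bbN}$. So the infinite closed relatively discrete subsets of $S$ must enter essentially --- exactly the phenomenon you flag as ``the main difficulty'' without supplying a way around it.

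For contrast, the paper avoids any fusion. It first proves (Proposition 3.1) that $S$ can be chosen so that \emph{every Souslin family} $\A$ in the Effros Borel space $F(E)$ that contains all subsets of $S$ closed in $E$ must contain a member covering an entire fiber; this is obtained by transporting, via the dense selections $g_n$ of Lemma 2.3.1 and an explicit Borel map $\varphi:F(T)\to F(E)$, the classical Hurewicz non-analyticity theorem for the compact subsets of the rationals. Theorem 1.1 then follows because the family of those $A\in F(E)$ that embed isometrically into $M$ is Souslin (it is the projection of the Borel set of closed isometry graphs in $F(E\times M)$ --- this is where completeness of $M$ is used, to make these graphs closed with closed projections). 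The ``rigidity'' you hope to engineer into $S$ is thus replaced by a descriptive-set-theoretic overspill argument, which is precisely the device that exploits the infinite closed subsets of $S$ that your construction never uses.
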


The result from \cite{PP} follows from this theorem, if we consider the restriction map $f : C[0,1] \to C[\frac{1}{2}\, , 1 ]$ (recall that by the Banach-Mazur theorem, cf. \cite{FHHMPZ}, Theorem 5.17, the space $(C(I), d_{\sup})$ of all real-valued continuous functions on the interval $I$, equipped with the metric $d_{\sup}(f,g)= \sup \{ \mid \! f(t) - g(t)\! \mid : t \in I \}$, is isometrically universal for all separable metric spaces).

Also, as in \cite{PP}, one can replace in this theorem isometries by uniform homeomorphisms.

The proofs will go along the same lines as in \cite{PP}, and  an essential part of the reasonings can be taken directly from \cite{PP}, cf. sec.4.

 However,  a classical Hurewicz's theorem on non-analyticity of the set of  compact subsets of the rationals is applied in a different way than  in \cite{PP}.
 We shall prove a result based on the Hurewicz theorem in a slightly more general form than needed for Theorem 1.1 in Section 3, to establish a link with some questions concerning the dimension, discussed in Section 5.

\section{Preliminaries}

\subsection{The Effros Borel spaces.}
Our terminology related to the descriptive set theory follows \cite{D} and \cite{K}.
An analytic space is a metrizable continuous image of the irrationals.

Given an analytic space $E$, we denote by $F(E)$ the space of closed subsets of $E$ and  $\B _{F(E)}$ - the Effros Borel structure in $F(E)$,
 is the $\sigma$-algebra in $F(E)$ generated by the sets $\{ A \in F(E): A \cap U \neq \emptyset \}$, where $U$ is open in $E$.

 We shall say that $\A \subset F(E)$ is a Souslin set in the Effros Borel space $(F(E), \B _{F(E)})$ if $\A$  is a result of the Souslin operation on sets from $\B _{F(E)}$.

 If $X$ is a compact metrizable space, we shall consider the hyperspace $F(X)$ with the Vietoris topology and then $\B _{F(X)}$ coincides with the $\sigma$-algebra of Borel sets in the compact metrizable space $F(X)$.

  If $X$ is a compact metrizable extension of an analytic set $E \subset X$, the map $A \to \overline{A}$ (the closure is taken in $X$) from $F(E)$ to $F(X)$ is a Borel isomorphism, with respect to the Effros Borel structures, onto the analytic subspace $\{ \overline{A} : \, A \in F(E) \}$ of the hyperspace $F(X)$ and hence Souslin sets in $F(E)$ are mapped onto analytic sets in $F(X)$, cf. \cite{D}, Section 2. In particular, if $E \subset G \subset X$ and $G$ is analytic, the collection of closures of elements of $F(E)$ in $G$ is a Souslin set in $F(G)$.

 \subsection{The Hurewicz theorem.}

 Let $I = [0,1]$ and let $\bbQ$ be the set of rationals in $I$.

 The classical Hurewicz theorem asserts that any Souslin set in $F(I)$ containing all compact subsets of $\bbQ$, contains an element intersecting $I \setminus \bbQ$.

 We shall derive from this theorem the following observation, which we shall use in the next section.

 Let us arrange points of $\bbQ$ into a sequence $q_{1} , q_{2} , \ldots $ (without repetitions), let
 \smallskip\begin{enumerate}
\item[(2.1)] $\;D = \{ (q_{n}, \frac{1}{m}) : \, n=1,2,\ldots, \; m \geq n \}$, $\;L = (I \setminus \bbQ ) \times \{ 0 \} $,
\end{enumerate}
let
 \smallskip\begin{enumerate}
\item[(2.2)] $\;T = L \cup D $
\end{enumerate}
be the subspace of the plane (notice that $D$ is relatively discrete in $T$), and let
 \smallskip\begin{enumerate}
\item[(2.3)] $\;\D = \{ A \subset D: \; A \hbox{ is closed in } T \}$.
\end{enumerate}

\medskip

\noindent{\bf Lemma 2.2.1.}
{\it For any Souslin set $\E$ in $F(T)$ containing $\D$, some element of $\E$ intersects $L$.}

\begin{proof} For $A \in F(T)$, $\overline{A}$ will denote the closure in the plane. As was recalled in 2.1, the set $\{ \overline{A}: \; A \in \E \}$ is analytic in $F(\overline{T})$ (notice that
$\overline{T}= (I \times \{ 0 \} ) \cup D$), hence the set $\{ (K, \overline{A}) \in F(I) \times F(\overline{T}): \; A \in \E \hbox{ and } K \times \{ 0 \} \subset \overline{A} \}$ is analytic in the product of the hyperspaces, and so is its projection onto $F(I)$,
 \smallskip\begin{enumerate}
\item[(2.4)] $\;\E ^{\star} = \{ K \in F(I): \; K \times \{ 0 \} \subset \overline{A} \hbox{ for some } A \in \E \}$.
\end{enumerate}

If $K \subset \bbQ$ is compact, $A = D \cap (K \times I)$ is closed in $T$ and $ K \times \{ 0 \} \subset \overline{A}$, hence $K \in \E ^{\star}$, cf. (2.4). By the Hurewicz theorem, there is $A \in \E$ such that $\overline{A}$ intersects $L$, cf. (2.1) and (2.4), and
 since  $A$ is closed in $T$, $A$ intersects $L$.
\end{proof}

\subsection{A remark on continuous maps onto the irrationals.}
We shall need the following observation. This is close to some well-known results, but for readers convenience, we shall provide a brief justification.

\medskip

\noindent{\bf Lemma 2.3.1.}
 {\it Let $f : E \to F$ be a continuous map of an analytic space onto a non-$\sigma$-compact metrizable space.
There is a closed copy of the irrationals $P$ in $F$ and continuous maps $g_{n} : P \to E$ such that, for each $t \in P$, $\{ g_{n} (t): \, n = 1,2,\ldots \}$ is a dense subset of $f^{-1} (t)$.}

\medskip

\begin{proof}
Let $p : \bbN  ^{\bbN} \to E$ be a continuous surjection of the irrationals onto the analytic space $E$.

Then $u = f \circ p : \bbN  ^{\bbN} \to F$ is a continuous surjection onto a non-$\sigma$-compact metrizable space and one can find a closed copy of the irrationals $P$ in $F$ such that the restriction map $u \mid u^{-1} (P) : u^{-1} (P) \to P$ is open, cf. \cite{PZ}, proof of Theorem 3.1.

By a selection theorem of Michael \cite{M}, one can define a sequence of continuous selections $w_{n} : P \to u^{-1} (P)$ for the lower-semicontinuous multifunction $t \to u^{-1} (t)$ such that, for each $t \in P$, the set $\{ w_{n} (t): \, n=1,2, \ldots \}$ is dense in $u^{-1}(t)$.

Then the functions $g_{n} = p \circ w_{n} : P \to f^{-1} (P)$ satisfy the assertion.
\end{proof}

\section{An application of the Hurewicz theorem.}
The following proposition strengthens a known fact that, for the irrationals $\bbN ^{\bbN}$, any Souslin set in $F(\bbN ^{\bbN} )$ containing all countable closed sets in $\bbN ^{\bbN}$, contains also a non-$\sigma$-compact set (this is stated in \cite{K}, Exercises 27.8, 27.9; to derive this fact from the proposition, notice that $\bbN ^{\bbN}$ is homeomorphic to $\bbN ^{\bbN} \times \bbN ^{\bbN}$ and consider the projection $\bbN ^{\bbN} \times \bbN ^{\bbN} \to \bbN ^{\bbN}$).

The setting is a bit more general than needed for Theorem 1.1, but it is useful to establish connections with some topics in the dimension theory, discussed in Section 5.

\begin{prop}
Let $f : E \to F$ be a continuous map of an analytic space onto a non-$\sigma$-compact metrizable space. Then  there exists a relatively discrete set $S$ in $E$ such that for any Souslin set $\A$ in $F(E)$  containing all subsets of $S$ closed  in $E$, there are $A \in \A$ and $y \in F$ with $f^{-1}(y) \subset A$.
\end{prop}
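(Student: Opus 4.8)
The plan is to transport the combinatorial core of Lemma 2.2.1 along the maps $g_n$ provided by Lemma 2.3.1. First I would apply Lemma 2.3.1 to get a closed copy of the irrationals $P \subseteq F$ together with continuous maps $g_n : P \to E$ whose values at each $t \in P$ are dense in $f^{-1}(t)$. Using $P \cong I \setminus \bbQ$, I identify $P$ with $I \setminus \bbQ$ and reuse the space $T = L \cup D$ from (2.1)--(2.3), where $L = (I \setminus \bbQ) \times \{0\}$ is identified with $P$. The candidate relatively discrete set is then
\[
S = \bigl\{ g_n(q_m,\tfrac{1}{k}) \,? \bigr\}
\]
--- more precisely, I would define $S \subseteq E$ as the image of $D$ under the map $h : T \to E$ given by $h(t,0) = $ (nothing; $L$ is handled separately) and $h(q_n, \tfrac1m) = g_m(\,\cdot\,)$; the cleanest formulation: enumerate a countable dense $P$-indexed family. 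Concretely, let $S$ consist of all points $g_m(t)$ for a suitable countable scheme, chosen so that (i) $S$ is relatively discrete in $E$, and (ii) closed subsets $A \subseteq S$ pull back, via the $g_n$'s, to closed subsets of $D$ in $T$. The natural choice mirrors (2.1): index $S$ by $D$ itself through a map $T \to E$ sending $(q_n,\tfrac1m)$ into the fiber over... here I must be careful, since the first coordinate $q_n$ lies in $\bbQ$ while the $g_m$ are defined on $P = I\setminus\bbQ$. So instead I work with the full map $\varphi : D \to E$, $\varphi(q_n,\tfrac1m) = g_m(e_n)$ for a fixed injection $n \mapsto e_n \in P$ of $\bbN$ into $P$ with a carefully chosen accumulation structure, and set $S = \varphi(D)$.

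The key step is the following pullback construction. Given a Souslin set $\A$ in $F(E)$ containing all closed-in-$E$ subsets of $S$, I build a Souslin set $\E$ in $F(T)$ as follows: define a continuous map $\Phi : T \to E$ extending $\varphi$ on $D$ and sending $L$ into $E$ via $\Phi(t,0) = g_1(t)$ (or, better, a Borel/continuous gadget capturing all the $g_n(t)$ simultaneously --- one may instead pass to $T' = L \times \bbN \cup D$ or work fiberwise). Then $\E = \{ B \in F(T) : \overline{\Phi(B)} \in \A \}$ (closure in $E$), or a suitable variant, is Souslin in $F(T)$ by the standard fact (recalled in 2.1) that $B \mapsto \overline{\Phi(B)}$ and preimages under it respect Souslin/Borel structure on Effros Borel spaces. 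I must check $\D \subseteq \E$: if $A \subseteq D$ is closed in $T$, then $\Phi(A) \subseteq S$ and is closed in $E$ (this uses that $\Phi$ is a closed embedding of $D$ into $E$, which I arrange by the relative discreteness of $S$ and completeness/analyticity bookkeeping), hence $\Phi(A) \in \A$, so $A \in \E$. Lemma 2.2.1 now yields $B \in \E$ meeting $L$, say at a point corresponding to $t \in P$. Then $A := \overline{\Phi(B)} \in \A$, and because $B$ accumulates at the $L$-point $t$ through points of $D$ of the form $(q_n, \tfrac1m)$ with the indices forced to run through a cofinal set, the images $g_m(e_n)$ accumulate onto the whole dense set $\{g_m(t) : m\}$ in $f^{-1}(t)$; taking closure in $E$ gives $f^{-1}(t) \subseteq A$, which is the desired conclusion with $y = t$.

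The main obstacle is arranging the index bookkeeping in (2.1) so that a single $L$-point of a closed set $B \in \E$ forces \emph{all} the dense values $g_m(t)$, $m=1,2,\dots$, into $\overline{B}$ simultaneously --- the constraint $m \geq n$ in the definition of $D$ is exactly what makes "$K \times\{0\} \subseteq \overline{A}$" in Lemma 2.2.1 encode a single point of $L$, so I need the analogous device to encode the \emph{whole fiber} rather than one selection. Concretely this means choosing the injection $n \mapsto e_n \in P$ and the pairing of the $\bbN$-coordinates of $D$ with the index $m$ of $g_m$ so that, for every $t \in P$, the set $\{ (q_n, \tfrac1m) \in D : g_m(e_n) \text{ is "close to" } g_m(t) \text{ and } m \geq n \}$ accumulates at $(t,0)$; a diagonal argument using continuity of each $g_m$ and density of $\{e_n\}$ near $t$ should achieve this. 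The second obstacle, more routine, is verifying that $S$ is genuinely relatively discrete in $E$ and that $\Phi|_D$ is a closed embedding, for which I would shrink $D$ (pass to a relatively discrete subset hitting each fiber densely) and invoke the standard argument, already used in \cite{PP} as indicated in sec.~4, that relatively discrete sets in analytic spaces can be taken closed and discretely embedded after harmless modification.
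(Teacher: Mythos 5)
Your high-level plan is the paper's: transport Lemma 2.2.1 into $E$ via the selections $g_n$ of Lemma 2.3.1 and the gadget $T=L\cup D$. But the step you yourself flag as ``the main obstacle'' is a genuine gap, and the fix you sketch cannot work. You want the fiber to appear as a \emph{closure}: $\E=\{B\in F(T):\overline{\Phi(B)}\in\A\}$, with the hope that a set $B\in\E$ meeting $L$ at $t$ accumulates at $(t,0)$ through points of $D$ whose images sweep out a dense subset of $f^{-1}(t)$. Lemma 2.2.1 gives no such thing: it only produces \emph{some} closed $X\in\E$ with $X\cap L\neq\emptyset$, and nothing prevents $X$ from being, say, a singleton of $L$, in which case $\overline{\Phi(X)}$ is a single point and no index bookkeeping on $e_n$ or on the pairing with $m$ can recover the fiber. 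The paper circumvents this by \emph{defining} the hyperspace map to insert full fibers over the $L$-part: $\varphi(X)=f^{-1}(h(X\cap L))\cup(S\cap f^{-1}(h(X\cap D)))$, so that $f^{-1}(h(a))\subset\varphi(X)$ holds by fiat once $a\in X\cap L$. The price is that $\varphi$ is no longer of the form $B\mapsto\overline{\Phi(B)}$, and its Effros--Borel measurability must be checked by hand; that verification is exactly where the density of $\{g_m(t)\}_m$ in $f^{-1}(t)$ and the combinatorics of $D$ are consumed, not in the final step.

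Two further points you leave unresolved are handled cleanly in the paper. First, the mismatch ``$q_n\in\bbQ$ but $g_m$ lives on $P$'' is not repaired by an ad hoc injection $n\mapsto e_n$ with ``carefully chosen accumulation structure'': one takes a closed homeomorphic embedding $h:T\to P$ of the \emph{whole} zero-dimensional Polish space $T$ (including $L$) into $P$, so the points $c_n=h(d_n)$ automatically accumulate exactly at the points $h(a)$, $a\in L$, and $S=\{g_m(c_n):m\le n\}$ is relatively discrete because $f(S)=h(D)$ is. Second, your assignment of a single selection $g_m$ to each point $(q_n,\tfrac1m)$ keyed to the height $\tfrac1m$ fails: points of $D$ converging to $(t,0)$ have $m\to\infty$, so for any fixed $m_0$ the selection $g_{m_0}$ would never be witnessed near the limit. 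The paper's device is to let the $n$-th point of an enumeration of $D$ carry the first $n$ selections $g_1(c_n),\dots,g_n(c_n)$, so that along any sequence $d_{n_k}\to a$ every fixed $g_m$ is eventually present; this is what makes the neighbourhood argument in the proof that $\{X:\varphi(X)\cap U\neq\emptyset\}$ is a countable union of basic Effros sets go through.
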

\begin{proof} Let $P$ be a closed copy of the irrationals in $F$ and $g_{n} : P \to E$ continuous maps described in Lemma 2.3.1, and let $T = L \cup D$ be the subspace of the plane defined in (2.1) and (2.2).

Since $T$ is a zero-dimensional $G_{\delta}$-subset of the plane, there
is a homeomorphic embedding
\smallskip\begin{enumerate}
\item[(3.1)] $\;h : T \to P$, $\; h(T)$ closed in $P$.
\end{enumerate}
Let us arrange points of $D$ into a sequence without repetitions
\smallskip\begin{enumerate}
\item[(3.2)] $\;D = \{ d_{1}, d_{2}, \ldots \}\;$ and  $\;c_{n} = h(d_{n})$.
\end{enumerate}

We shall check that, cf. (3.2),
\smallskip\begin{enumerate}
\item[(3.3)] $\;S = \{ g_{m} (c_{n}): \; n=1,2, \ldots , \; m \leq n \} \subset E$
\end{enumerate}
satisfies the assertion of the proposition.

Since $g_{m} (c_{n}) \in f^{-1} (c_{n})$, $\, f(S) = h(D)$ is relatively discrete and $S$ intersects each fiber of $f$ in at most finite set, cf. (3.3). Therefore $S$ is relatively discrete.

Let, for $X \in F(T)$,
\smallskip\begin{enumerate}
\item[(3.4)] $\; \varphi (X) = f^{-1} (h(X \cap L)) \cup (S \cap f^{-1}(h(X \cap D)))$.
\end{enumerate}
Since all accumulation points of $S$ in $E$ are in $f^{-1} (h(L))$ and $h(X)$ is closed in $F$, cf. (3.1), we have $\varphi (X) \in F(E)$.

We shall check that
\smallskip\begin{enumerate}
\item[(3.5)] $\; \varphi : F(T) \to F(E)$ is Borel,
\end{enumerate}
with respect to the Effros Borel structure.

To that end, let us fix an open set $U$ in $E$, and let
\smallskip\begin{enumerate}
\item[(3.6)] $\; \U = \{ X \in F(T): \; \varphi (X) \cap U \neq \emptyset \}$.
\end{enumerate}

Let $X \in \U$. If for some $m \leq n$, $\, d_{n} \in X$ and $\, g_{m}(c_{n}) \in U$, cf. (3.2), (3.3), (3.4), the element $\{ Y \in F(T): \; d_{n} \in Y \}$ of $\B _{F(T)}$ contains $X$ and is contained in $\U$.

Let $a \in X \cap L$ and $f^{-1} (h(a)) \cap U  \neq \emptyset $. Since the points $g_{m} (h(a))$ are dense in $f^{-1}(h(a))$, there is $m$ such that $g_{m} (h(a)) \in U$. Let $V$ be a neighbourhood of $h(a)$ in $F$ such that $g_{m} (V) \subset U$, and let us pick a rectangle $\, J = (r,s) \times [0,\frac{1}{p})$ disjoint from $\, \{ d_{1}, \ldots , d_{m} \}$ with $r,s \in \bbQ$, containing $a$, such that $h(J \cap T) \subset V$. If $Y \in F(T)$ hits $J$, there is either $b \in Y \cap L$ with $h(b) \in V$ and then $f^{-1}(h(b)) \subset \varphi (Y)$ intersects $U$, or there is $d_{n} \in Y \cap J$ with $n > m$ and then, cf. (3.3), (3.4), $g_{m} (c_{n}) \in \varphi (Y) \cap U$.

Therefore the element $\{ Y \in F(T): \; Y \cap J \neq \emptyset \}$ of  $\B _{F(T)}$ contains $X$ and is contained in $\U$.

We demonstrated that $\U$ is a countable union of elements of  $\B _{F(T)}$, hence belongs to the Effros Borel structure of $F(T)$.

Having checked (3.5), let us consider the set
\smallskip\begin{enumerate}
\item[(3.7)] $\; \mathscr{S} = \{ A \subset S: \; A \in F(E) \}$
\end{enumerate}
and let
\smallskip\begin{enumerate}
\item[(3.8)] $\; \mathscr{S} \subset \A$, $\; \A$ is Souslin in $(F(E), \B _{F(E)})$.
\end{enumerate}

By (3.5),
\smallskip\begin{enumerate}
\item[(3.9)] $\E = \varphi ^{-1} ( \A)$ is Souslin in $(F(T), \B _{F(T)})$.
\end{enumerate}
If $X \subset D$ is closed in $T$, $h(X)$ is closed in $F$ and $\varphi (X)$ is closed in $E$, cf. (3.4), hence $\varphi (X) \in \mathscr{S}$, cf. (3.7). Therefore, by (3.8), for the set $\D$ defined in  (2.3), we have $\D \subset \E$ and Lemma 2.2.1 provides $X \in \E$ and a point $a \in X \cap L$. By (3.4) and (3.9) we get $A = \varphi (X) \in \A$ and $f^{-1} (h(a)) \subset A$.
\end{proof}

\section{Proof of Theorem 1.1}
We shall recall briefly some reasonings from \cite{PP} to derive this theorem from Proposition 3.1.

Given $f : E \to F$ as in this theorem, let us pick $S$ satisfying the assertion of Proposition 3.1.

Let $e$ be the complete metric on $E$ and let $(M,d)$ be any complete separable metric space, containing isometric copies of every subset of $S$ closed in $E$. Let
$$\H = \{ T \in F(E \times M): \; \hbox{ for every } (x_{1},y_{1}), (x_{2}, y_{2} ) \in T, \; e(x_{1}, x_{2} ) = d(y_{1}, y_{2} ) \}.$$
One checks, cf. \cite{PP}, page 193, that $\H$ is in $ \B _{F(E \times M)}$ and the map $T \to \pi (T)$ associating to $T \in \H$ its projection onto $E$ is a Borel map $\pi : \H \to F(E)$.

Therefore $\A = \pi (\H ) $ is a Souslin set in $(F(E), \B _{F(E)})$. If $X \subset S$ is closed in $E$, there is an isometry $f : X \to f(X) \subset M$ and the graph of $f$ is an element of $\H$.

It follows that the Souslin set $\A$ contains all subsets of $S$ closed in $E$, and by the choice of $S$, some $A \in \A$ contains a fiber $f^{-1}(y)$.

Now, $A = \pi (T)$ and $T$ is the graph of an isometry that embeds $A$ in $M$. In effect,  $f^{-1}(y)$ embeds isometrically in $M$.

\section{The collections of zero-dimensional sets in Effros Borel spaces.}
Our terminology concerning the dimension theory follows \cite{vM}.

Given an analytic space, we shall write
\smallskip\begin{enumerate}
\item[(5.1)] $\; F_{0}(E) = \{ A \in F(E): \; \hbox{ dim}A = 0 \}$.
\end{enumerate}

We shall derive from Proposition 3.1 the following result.
\begin{prop}
Let $E$ be an analytic space that admits a continuous map $f : E \to F$  onto a non-$\sigma$-compact metrizable space such that all fibers $f^{-1}(y)$ have positive dimension.
 Then  for any analytic extension $G$ of $E$ with dim$(G \setminus E) \leq 0$, the set $F_{0}(G)$ is not  Souslin in the Effros Borel space $(F(G), \B _{F(G)})$.
\end{prop}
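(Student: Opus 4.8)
The plan is to derive Proposition~5.2 from Proposition~3.1 by arguing contrapositively: assuming $F_0(G)$ were Souslin in $(F(G),\B_{F(G)})$, I would produce a Souslin set $\A$ in $(F(E),\B_{F(E)})$ that contains every subset of $S$ closed in $E$ (for the relatively discrete set $S$ furnished by Proposition~3.1 applied to $f$), yet such that no element of $\A$ contains a fiber $f^{-1}(y)$ --- contradicting the conclusion of Proposition~3.1. The key observation is that, since $E$ is analytic and $G$ is an analytic extension with $\dim(G\setminus E)\le 0$, the operation of taking closures in $G$ is, by the remarks in Section~2.1, a Borel isomorphism of $(F(E),\B_{F(E)})$ onto the Souslin subset $\{\overline{A}^{\,G}:A\in F(E)\}$ of $F(G)$. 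Composing this with the (Borel) map $\overline{A}^{\,G}\mapsto$ ``is $F_0(G)$'' would pull back $F_0(G)$ to a Souslin subset of $F(E)$.

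Concretely I would set
\[
\A=\{A\in F(E):\ \overline{A}^{\,G}\in F_0(G)\},
\]
which is Souslin in $(F(E),\B_{F(E)})$ under the standing assumption. Two things then need to be checked. First, $\A$ contains every closed subset of $S$: if $A\subseteq S$ is closed in $E$, then $A$ is relatively discrete (being a subset of the relatively discrete $S$), so $\overline{A}^{\,G}\setminus A\subseteq G\setminus E$, hence $\overline{A}^{\,G}$ is the union of the closed discrete set $A$ and a subset of the at-most-zero-dimensional set $G\setminus E$; by the countable sum theorem for dimension (in the form valid for the relevant class of spaces, cf.\ \cite{vM}) $\dim\overline{A}^{\,G}\le 0$, so $\overline{A}^{\,G}\in F_0(G)$ and $A\in\A$. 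Second, no $A\in\A$ contains a fiber: if $f^{-1}(y)\subseteq A$ then $f^{-1}(y)\subseteq\overline{A}^{\,G}$, and since $\dim f^{-1}(y)>0$ by hypothesis, monotonicity of dimension gives $\dim\overline{A}^{\,G}>0$, so $\overline{A}^{\,G}\notin F_0(G)$ and $A\notin\A$. These two facts together contradict Proposition~3.1, so $F_0(G)$ cannot be Souslin.

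The main obstacle I anticipate is the dimension-theoretic bookkeeping in the first checked point: one must be careful that ``$\dim$'' here is the covering or small inductive dimension for separable metrizable spaces where all the classical tools (monotonicity, the countable closed sum theorem, the fact that adding a closed discrete set or a zero-dimensional piece keeps dimension $\le 0$) are available, and that $\overline{A}^{\,G}$ really does decompose as claimed --- i.e., that the accumulation points of $A\subseteq S$ in $G$ all land in $G\setminus E$. This last point uses that $A$ is closed in $E$ (so it has no accumulation points inside $E$) together with $S$ being relatively discrete in $E$. The rest is a routine translation through the Borel isomorphism of Section~2.1 and an invocation of Proposition~3.1.
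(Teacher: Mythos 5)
Your proposal is correct and follows essentially the same route as the paper: apply Proposition 3.1 to get $S$, pull $F_0(G)$ back through the Borel closure map $A\mapsto\overline{A}$ of Section 2.1 to obtain a Souslin family $\A\subseteq F(E)$, verify $\overline{A}\setminus A\subseteq G\setminus E$ for closed $A\subseteq S$ so that $\dim\overline{A}\le 0$, and contradict the fact that some member of $\A$ must contain a positive-dimensional fiber. The only cosmetic difference is that the paper defines $\A=\{A\in F(E):\dim\overline{A}\le 0\}$ directly rather than via membership of $\overline{A}$ in $F_0(G)$, which sidesteps the (harmless) issue of the empty set.
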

\begin{proof}
By Proposition 3.1, there is a relatively discrete set $S$ in $E$ such that for any Souslin set $\A$ in $F(E)$ containing $\mathscr{S} = \{ A \in F(E): \, \emptyset \neq A \subset S \}$, some element of the set $\A$ contains a fiber of $f$ and hence has positive dimension.

 Now, consider an analytic extension $G$ of $E$ with  dim$(G \setminus E) \leq 0$ and, aiming at a contradiction assume that
$F_{0}(G)$ is  Souslin in $F(G)$. As was recalled in sec. 2.1, the map $A \to \overline{A}$ from $F(E)$ to $F(G)$ is Borel, and hence we would get that the set $\A = \{ A \in F(E): \; \hbox{ dim}\overline{A} \leq 0 \}$ is Souslin in $F(E)$.

If $A \in \mathscr{S}$, then $A$ is a relatively discrete closed set in $E$, and hence $\overline{A} \setminus A$ is a closed subset of $G$ contained in $G \setminus E$. This implies that dim$\overline{A} = 0$, i.e., $\mathscr{S} \subset \A$. However, all members of $\A$ are zero-dimensional, which contradicts properties of $\mathscr{S}$.
\end{proof}

In particular, if $\bbP$ is the set of irrationals in $I = [0,1]$,
\smallskip\begin{enumerate}
\item[(5.2)] $\; F_{0} ( \bbP \times I)$ is not Souslin in $F(\bbP \times I)$
\end{enumerate}
(this rectifies a remark in \cite{CGK}, \S 3.A).

Banakh, Cauty and Zarichnyi \cite{BCZ}, Question 9.12, asked about the Borel type of the collection $F_{0}(E)$ in the space $CL(E) = F(E) \setminus \{ \emptyset \}$, when $E$ is a completely metrizable separable, and $CL(E)$ is considered with the Wijsman topology $\tau _{W}$, determined by some metric $d$ generating the topology of $E$ (i.e., $\tau _{W}$ is the weakest topology making all functionals $ A \to {\rm dist} (z,A)$, $z \in E$,  continuous), cf. \cite{B}.

The Wijsman hyperspace $(CL(E), \tau _{W})$ is completely metrizable, separable, cf. \cite{C}, and the  Borel sets with respect to $\tau _{W}$ coincide with the members of the Effros Borel structure in $CL(E)$.
Therefore,
\smallskip\begin{enumerate}
\item[(5.3)] $\; F_{0} ( \bbP \times I)$ is not a Borel (or even  Souslin) set  in $CL(\bbP \times I)$.
\end{enumerate}

One can check that its complement $F(\bbP \times I) \setminus F_{0}(\bbP \times I)$ is Souslin.
Let us consider, however, the subspace $I^{2} \setminus \bbQ ^{2}$ of the square, $\bbQ = I \setminus \bbP$. Since $(I^{2} \setminus \bbQ ^{2}) \setminus (\bbP \times I) = \bbQ \times \bbP$ is zero-dimensional, also $F_{0}( I^{2} \setminus \bbQ ^{2})$ is not Souslin in $F( I^{2} \setminus \bbQ ^{2})$, by Proposition 5.1. But it is not clear to us whether
$F( I^{2} \setminus \bbQ ^{2}) \setminus F_{0}( I^{2} \setminus \bbQ ^{2})$ is Souslin.

In fact, we do not know an answer to the following general question.

\medskip

 \noindent {\bf Question 5.2.} {\it Does there exist an analytic space $E$ such that $F(E) \setminus F_{0}(E)$ is not Souslin in the Effros Borel structure?}

\medskip

This question is related to the following problem, asked in \cite{P1}, where countable-dimensional spaces are countable unions of zero-dimensional spaces.

\medskip

 \noindent {\bf Problem 5.3.} {\it Is the collection $\C$ of all countable-dimensional compact sets in the Hilbert cube $I^{\bbN}$ coanalytic in the hyperspace $F(I^{\bbN})$ equipped with the Vietoris topology? }

\medskip

To see the link between these two questions, let us consider a Borel set $E \subset I^{\bbN}$ such that $I^{\bbN} \setminus E$ is countable-dimensional and all countable-dimensional subsets of $E$ are at most zero-dimensional, cf. \cite{P2}. We shall assume in addition that $E$ is disjoint from the set $\Sigma $ consisting of points in $I^{\bbN}$ with all but finitely many coordinates zero.

By \cite{BP}, Ch.V, \S 5 , there is a homeomorphism $h : I^{\bbN} \setminus \Sigma \to \bbR ^{\bbN} \times \bbR^{\bbN}$ ($\bbR$ - the real line), let $p : \bbR ^{\bbN} \times \bbR^{\bbN} \to
\bbR ^{\bbN} $ be the projection and let $f = p \circ h \mid E : E \to \bbR ^{\bbN} $. Then $f$ is a continuous surjection whose all fibers are uncountable-dimensional. Therefore, by Corollary 5.1, $F_{0}(E)$ is not Souslin in the Effros Borel space.

We do not know if the set $\E = F(E) \setminus F_{0}(E)$ is Souslin. Let us show, however, that if this is the case,  $\C$ in Problem 5.3 is coanalytic.

Suppose that $\E$ is Souslin in $(F(E), \B _{F(E)})$. Then, as was noticed in Section 2.1, the collection $\E ^{\star } = \{ \overline{A}: \; A \in \E \}$ of the closures in $I^{\bbN}$
is analytic in the hyperspace $F ( I^{\bbN})$. Now, $K \in F(I^{\bbN}) \setminus \C$ if and only if $K \cap E$ is uncountable-dimensional, which is equivalent to $K \cap E \not \in F_{0}(E)$.
Therefore, $F(I^{\bbN} ) \setminus \C$ is the projection of the analytic set $\{ (K,L) \in F(I^{\bbN} ) \times F(I^{\bbN} ): \; L \subset K \hbox{ and } L \in \E ^{\star } \}$, hence it is analytic.

\end{document}